\documentclass[10pt, reqno]{amsart}

\usepackage[colorlinks=true, linkcolor=blue!50!green, urlcolor=blue!50!red]{hyperref}
\usepackage{cite, xcolor}
\usepackage{amsmath, graphicx}
\usepackage{mathtools, ulem}
\usepackage{amsthm}
\usepackage{amssymb}
\usepackage{amsfonts}
\usepackage{latexsym}
\usepackage{setspace}
\usepackage{enumitem}
\setlist[itemize]{leftmargin=*}
\setlist[enumerate]{leftmargin=*}
\setlist[enumerate, 1]{label=(\alph*), ref=\alph*}
\setlist[enumerate, 2]{label=(\roman*)}
\usepackage[font=footnotesize,labelfont=bf]{caption}

\renewcommand{\epsilon}{\varepsilon}
\renewcommand{\Pi}{\P}

\usepackage{seqsplit}
\makeatletter
\def\@bignumber#1#2{%
	\ifx#2\end
	#1\let\next\@gobble
	\else
	#1\hspace{0pt plus 1pt}\let\next\@bignumber
	\fi
	\next#2}
\newcommand{\bignumber}[1]{\@bignumber#1\end}
\makeatother

\newtheorem{theorem}{Theorem}

\newtheorem{lemma}[theorem]{Lemma}
\theoremstyle{definition}
\newtheorem{problem}[theorem]{Problem}

\newtheorem{example}[theorem]{Example}

\numberwithin{equation}{section}




\renewcommand{\phi}{\varphi}

\newcommand{\N}{\mathbb{N}}
\renewcommand{\P}{\mathbb{P}}

\newcommand{\R}{\mathbb{R}}

\renewcommand{\S}{\mathbb{S}}
\newcommand{\Z}{\mathbb{Z}}
\renewcommand{\pmod}[1]{\;(\operatorname{mod} #1)}

\renewcommand{\geq}{\geqslant}
\renewcommand{\leq}{\leqslant}

\let\oldenumerate=\enumerate
\def\enumerate{
	\oldenumerate
	\setlength{\itemsep}{5pt}
}
\let\olditemize=\itemize
\def\itemize{
	\olditemize
	\setlength{\itemsep}{5pt}
}

\allowdisplaybreaks

\title[Multidimensional Schinzel-type theorems]{Multidimensional Schinzel-type theorems for multiplicative functions
near prime tuples}

\author[S.R.~Garcia]{Stephan Ramon Garcia}
\address{Department of Mathematics\\Pomona College\\610 N. College Ave., Claremont, CA 91711} 
\email{stephan.garcia@pomona.edu}
\urladdr{http://pages.pomona.edu/~sg064747}
\thanks{Partially supported by NSF grant DMS-1265973.}

	\author{Gabe Udell}
	\email{grua2017@mymail.pomona.edu}
	
	\author{Jiahui Yu}
	\email{jyad2018@mymail.pomona.edu}

\begin{document}

\begin{abstract}
Assuming Dickson's conjecture, we obtain multidimensional analogues of recent results on the behavior of certain multiplicative arithmetic functions near twin-prime arguments.  This is inspired by analogous unconditional theorems of Schinzel undertaken without primality assumptions. 
\end{abstract}

\maketitle

\section{Introduction}
Our aim is to generalize recent results on 
the behavior of certain multiplicative functions near twin-prime arguments
and also several related theorems of Schinzel undertaken without primality assumptions.  
In particular, we obtain multidimensional Schinzel-type results
for more general multiplicative functions, in which prime pairs are replaced with
prime tuples and the additive offsets from the prime arguments are essentially arbitrary. Consequently,
the present work subsumes and generalizes many results from \cite{PRBTP, PRBTP2, GLS}.

Despite a flurry of recent activity \cite{Maynard, Polymath, Polymath2, Zhang},
the existence of infinitely many twin primes is still conjectural.  Consequently, 
results involving twin primes and, more generally,
prime tuples, must rely on unproven conjectures.
Dickson's conjecture is one of the weakest widely-believed conjectures that implies the twin prime
conjecture \cite{Dickson, 1CRTA, Ribenboim}.  It is far weaker than the celebrated Bateman--Horn
conjecture, which concerns polynomials of arbitrary degree and makes
asymptotic predictions \cite{Bateman,Bateman2,1CRTA}.

\medskip\noindent\textbf{Dickson's Conjecture.}  {\it If
  $f_1,f_2,\ldots,f_k \in \Z[t]$ are linear polynomials with positive
  leading coefficients and $f=f_1f_2\cdots f_k$ does not vanish
  identically modulo any prime, then $f_1(t), f_2(t),\ldots,f_k(t)$
  are simultaneously prime infinitely often.}  \medskip

Before stating our main results, we briefly survey some of the relevant literature.
In what follows, $\phi$ denotes the Euler totient function.
In 2017, Garcia, Kahoro, and Luca showed that the Bateman--Horn conjecture implies $\phi(p-1) \geq \phi(p+1)$ for a majority of twin-primes pairs $p,p+2$ and that the reverse inequality holds for a small positive proportion of the twin primes
\cite{PRBTP}.  This bias disappears if only $p$ is assumed to be prime \cite{GL}.
Analogues for prime pairs were obtained in 2018 by Garcia, Luca, and Schaaff \cite{GLS}.
Although preliminary numerical evidence suggested that $\phi(p+1)/\phi(p-1)$ might remain bounded as
$p,p+2$ runs over the twin primes, 
Garcia, Luca, Shi, and Udell proved that Dickson's conjecture implies
that these quotients are dense in $[0,\infty)$ \cite{PRBTP2}. 

The motivation for multidimensional generalizations of these results goes back to Schinzel, who obtained
many similar results without primality restrictions.  For example, \cite[Thm.~1]{Schinzel3} ensures that
\begin{equation}\label{eq:Schinzel}\small
\text{$\left\{ \left( \frac{\phi(n+1)}{\phi(n+2)},\frac{\phi(n+2)}{\phi(n+3)},\ldots, \frac{\phi(n+d)}{\phi(n+d-1)} \right)
: n \in \N \right\}$
 is dense in $[0,\infty)^d$}.
\end{equation}
The same result holds with the sum-of-divisors function $\sigma$ in place of $\phi$ 
(Schinzel quips ``Theorem 2 is obtained from Theorem 1 by replacing the letter $\phi$ with $\sigma$'').  
The seminal result in this direction is Schinzel's 1954 observation that
\begin{equation}\label{eq:SchinzelOld}
  \text{$\left\{ \frac{\phi(n+1)}{\phi(n)} :  n =1,2,\ldots \right\}$ is dense in $[0,\infty)$},
\end{equation}
a variant of an obscure result of Somayajulu \cite{Somayajulu}.
This density result inspired later work of Schinzel, Sierpi\'{n}ski, Erd\H{o}s,
and others \cite{Schinzel2,Schinzel3, SS,SW,Erdos,Erdos2} (see also \cite[Ch.~1]{Sandor}).

Before stating our main result, we require a few words about notation.
In what follows, $\N = \{1,2,\ldots\}$ denotes the set of natural numbers, $\Z$ the set of integers,
and $\R$ the set of real numbers.  We let
$\P = \{2,3,5,7,11,\ldots\}$ denote the set of prime numbers; the symbol $p$ always refers to 
a prime number.  An $m$-tuple $(\alpha_1,\alpha_2,\ldots,\alpha_m) \in \Z$ is {\it admissible} if
there does not exist a $p \in \P$ such that $\alpha_1,\alpha_2,\ldots,\alpha_m$
form a complete residue system modulo $p$.  This ensures that no congruence obstruction
prevents the linear polynomials $x-\alpha_1, x-\alpha_2, \ldots, x - \alpha_m$ from being simultaneously
prime infinitely often.

Our main theorem is both a broad multidimensional generalization of the results of \cite{PRBTP2}
and a version of Schinzel's theorem \eqref{eq:Schinzel} with primality restrictions.

\begin{theorem}\label{Theorem:Main}
Let $f$ be a positive multiplicative function such that
\begin{enumerate}
    \item $\lim_{p\to \infty} f(p)=1$, and
    \item $\prod_{p\in \P}f(p)$ is not absolutely convergent,
\end{enumerate} 
let $$\lim_{n\to  \infty}\frac{h(n+1)}{h(n)}=\kappa \in (0,\infty),$$ and 
let $g = fh$.  For any distinct $\alpha_1,\alpha_2,\ldots, \alpha_{d}\in \Z$ and admissible 
$(\beta_1, \beta_2, \ldots, \beta_m)$ with $\alpha_i\neq \beta_j$ 
 for $1 \leq i \leq d$ and $1 \leq j \leq m$, Dickson's conjecture implies both
\begin{equation}\label{eq:S2}\small
\left\{\left(\frac{g(n+\alpha_2)}{g(n+\alpha_1)}, \frac{g(n+\alpha_3)}{g(n+\alpha_1)}, \dots, \frac{g(n+\alpha_{d})}{g(n+\alpha_1)}\right): n+\beta_1,n+\beta_2,\ldots, n+\beta_m \in \P\right\}    
\end{equation}
and 
\begin{equation}\label{eq:S1}\small
\left\{\left(\frac{g(n+\alpha_1)}{g(n+\alpha_2)}, \frac{g(n+\alpha_2)}{g(n+\alpha_3)}, \dots, \frac{g(n+\alpha_{d-1})}{g(n+\alpha_{d})}\right): n+\beta_1,n+\beta_2,\ldots, n+\beta_m \in \P\right\}
\end{equation}
are dense in $[0,\infty)^{d-1}$.
\end{theorem}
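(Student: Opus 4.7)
The plan is to reduce the theorem to two ingredients---a density statement for finite products of $f(p)$, and an application of Dickson's conjecture to an augmented linear system---and then combine them. First, I would factor each target ratio as
\[
\frac{g(n+\alpha_i)}{g(n+\alpha_j)} \;=\; \frac{f(n+\alpha_i)}{f(n+\alpha_j)}\cdot\frac{h(n+\alpha_i)}{h(n+\alpha_j)},
\]
and telescope the hypothesis $h(n+1)/h(n)\to\kappa$ to conclude that $h(n+k)/h(n)\to\kappa^{k}$ for every fixed $k\in\Z$. The $h$-factor thus tends to the constant $\kappa^{\alpha_i-\alpha_j}$, so the problem reduces to making the $f$-ratios dense in $(0,\infty)^{d-1}$ along the prescribed prime tuple; density in the open orthant automatically upgrades to density in $[0,\infty)^{d-1}$ by taking closures.

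Next I would establish a density lemma for $f$: because $f(p)\to 1$ and $\prod_p f(p)$ is not absolutely convergent, $\sum_p \log f(p)$ has terms tending to zero with $\sum_p|\log f(p)|=\infty$. A standard subset-sum argument (splitting $\log f(p)$ into positive and negative parts, with attention to the case in which only one side is infinite) shows that finite subset sums drawn from any cofinite set of primes are dense in every open interval of interest. Choosing primes sequentially from disjoint supports, for any targets $\rho_1,\ldots,\rho_d>0$, threshold $P_0$, and $\epsilon>0$ I can produce pairwise coprime squarefree integers $Q_1,\ldots,Q_d$ with all prime factors exceeding $P_0$ and $|f(Q_i)-\rho_i|<\epsilon$ for every $i$.

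With these $Q_i$'s in hand, I would set $Q=Q_1\cdots Q_d$ and use CRT to locate $n_0$ with $n_0\equiv-\alpha_i\pmod{Q_i}$ for every $i$. Writing $c_i=(n_0+\alpha_i)/Q_i$ and $Q'_i=Q/Q_i$ gives the identity $n+\alpha_i=Q_i(Q'_it+c_i)$ along the progression $n=Qt+n_0$. Applying Dickson to the $m+d$ linear polynomials
\[
Qt+n_0+\beta_j\quad(1\leq j\leq m)\qquad\text{and}\qquad Q'_it+c_i\quad(1\leq i\leq d)
\]
produces infinitely many $t$ for which every $n+\beta_j$ is prime and every $(n+\alpha_i)/Q_i$ is prime. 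Along these $t$ one has $(n+\alpha_i)/Q_i\to\infty$, so hypothesis (1) forces $f((n+\alpha_i)/Q_i)\to 1$ and hence $f(n+\alpha_i)\to f(Q_i)$. Combined with the $h$-factor, the ratios in \eqref{eq:S2} converge to $[f(Q_i)/f(Q_1)]\kappa^{\alpha_i-\alpha_1}$ and those in \eqref{eq:S1} to $[f(Q_i)/f(Q_{i+1})]\kappa^{\alpha_i-\alpha_{i+1}}$; choosing the $\rho_i$'s to absorb the $\kappa$-powers and match any prescribed target in $(0,\infty)^{d-1}$ finishes the argument.

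\noindent\textbf{Main obstacle.} The principal difficulty is verifying admissibility of the augmented Dickson system. For a prime $p\mid Q_i$, the polynomials $Qt+n_0+\beta_j$ reduce to $\alpha_i-\beta_j\pmod p$ and the $Q'_{i'}t+c_{i'}$ with $i'\neq i$ reduce to $(\alpha_{i'}-\alpha_i)/Q_{i'}\pmod p$; both must be nonzero, which forces the primes in each $Q_i$ to exceed $\max\{|\alpha_i-\beta_j|,|\alpha_i-\alpha_{i'}|\}$. Primes $p\nmid Q$ with $p>m+d$ are automatic, but the finitely many small primes $p\leq m+d$ require a case-by-case analysis---either by assigning each such $p$ to a $Q_{i^*}$ whose $\alpha_{i^*}$ differs mod $p$ from every other $\alpha_{i'}$ and every $\beta_j$, or by verifying directly that the $m+d$ forbidden residue classes for $t$ do not cover $\Z/p\Z$. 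Reconciling these admissibility constraints with the flexibility that the density lemma requires is where the bookkeeping must be done carefully.
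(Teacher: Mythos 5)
Your high-level architecture matches the paper's: telescope $h(n+k)/h(n)\to\kappa^k$ to reduce to $h\equiv1$, build pairwise coprime squarefree $Q_i$ with $f(Q_i)$ near prescribed targets and all prime factors avoiding a prescribed finite set (this is the paper's Lemma~\ref{Lemma:W}), impose $n\equiv-\alpha_i\pmod{Q_i}$ by CRT, feed the resulting linear system into Dickson's conjecture, and finish by passing to ratios (the paper does this via the rescaling $x_i=x_1\xi_i$ together with the homeomorphism of Lemma~\ref{Lemma:Homeo}). The paper actually isolates the hard construction into Theorem~\ref{Theorem:Primary} and then deduces Theorem~\ref{Theorem:Main} from it as a short corollary; your plan collapses both into one direct argument, so it inherits all of the difficulty of Theorem~\ref{Theorem:Primary}.

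That difficulty is precisely the step you defer to your ``main obstacle'' paragraph, and it is not resolved there; it is a genuine gap. Only the $\beta$-tuple is assumed admissible, and the combined tuple $(\alpha_1,\dots,\alpha_d,\beta_1,\dots,\beta_m)$ can fail to be admissible modulo a small prime. A concrete instance already appears in the paper's Example with $f(n)=\phi(n)/n$, $\alpha_1=1$, $\beta_1=0$, $\beta_2=2$: modulo $3$ the triple $(1,0,2)$ is a complete residue system, so the naive $(m+d)$-polynomial system $(Qt+n_0+\beta_j)_j\cup(Q_i't+c_i)_i$ with $3\nmid Q$ vanishes identically mod~$3$. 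Your first suggested fix---assigning $p$ to a single $Q_{i^*}$ whose $\alpha_{i^*}$ differs mod~$p$ from every $\alpha_{i'}$ and every $\beta_j$---need not be available: there may be no such $i^*$ (e.g.\ when every $\alpha_i$ coincides mod~$p$ with some $\beta_j$). Your second suggested fix---``verifying directly'' that the $m+d$ forbidden residues miss some class---is exactly what fails in the example above. Moreover even when $p\mid Q_{i^*}$ is arranged, $Q_{i^*}$ is squarefree, so one copy of $p$ is divided out; but $p^2$ can still divide $n+\alpha_{i^*}$ for some $i'\ne i^*$ (or higher powers of $p$ can divide $n+\alpha_{i}$), and nothing in the proposal controls those valuations.

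The paper's way through is the content of Section~\ref{Section:Primary}: for each small prime $p_j$ with $j\le L=\pi(m+d)$ it picks a residue $b_j$ with $P(b_j)\not\equiv0\pmod{p_j}$ (using only admissibility of the $\beta$'s), then shifts by $e_jp_j$ to cap the $p_j$-adic valuation $x_{i,j}$ of $\alpha_i+e_jp_j+b_j$ at $s=\lfloor\log_2 d\rfloor+1$ simultaneously for all $i$, and finally builds the Dickson polynomials $g_i(t)=(h_0(t)+\alpha_i)/(w_i\prod_j p_j^{x_{i,j}})$ so that each small-prime factor is divided out exactly. The target intervals $I_i$ in~\eqref{eq:I} are rescaled by $f(\prod_j p_j^{x_{i,j}})$ to compensate, which is why the bound $r$ in~\eqref{eq:r} appears and why Theorem~\ref{Theorem:Primary} only gives density in $[0,r]^d$ rather than $[0,1]^d$. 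Without some version of this valuation control, the admissibility check you flag cannot be completed, so the proposal as written does not establish the theorem.
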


The main ingredient in the proof of Theorem \ref{Theorem:Main} is the following 
result, which is of independent interest (despite its more technical statement)
since it generalizes several results from \cite{PRBTP2} that do not fall
under the umbrella of Theorem \ref{Theorem:Main}.

\begin{theorem}\label{Theorem:Primary} 
Let $f$ be a positive multiplicative function such that
\begin{enumerate}
    \item $\lim_{p\to \infty} f(p)=1$, and
    \item  for some subset $\S\subseteq \P$, $\prod_{p\in \S}f(p)$  diverges to $0$.
\end{enumerate}
For distinct $\alpha_1,\alpha_2,\ldots, \alpha_d \in \Z$ and an admissible 
$(\beta_1, \beta_2, \ldots, \beta_m) \in \Z^m$ with $\alpha_i\neq \beta_j$ for all $i,j$,
Dickson's conjecture implies that
\begin{equation*}
\big\{ \big(f(n+\alpha_1), f(n+\alpha_2), \dots, f(n+\alpha_d) \big): 
n+\beta_1,n+\beta_2,\ldots, n+\beta_m \in \P \big\}    
\end{equation*}
is dense in $[0,r]^d$, in which
\begin{equation}\label{eq:r}
r=\min{\bigg\{f\bigg(\prod_{j=1}^{\pi(m+d)}p_j^{x_{j}}\bigg): 0 \leq x_{j} \leq \lfloor \log_2 d \rfloor +1 \bigg\}}.
\end{equation}
Here $\pi(x)= \sum_{p\leq x} 1$ denotes the prime-counting function.
\end{theorem}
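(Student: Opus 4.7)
The strategy is to construct $n$ so that $n + \alpha_i = M_i P_i$ for each $i$, where $M_i$ is a positive integer with a prime factorization we control via the Chinese Remainder Theorem and $P_i$ is a prime delivered by Dickson's conjecture. By multiplicativity, $f(n + \alpha_i) = f(M_i)f(P_i)$, and hypothesis (1) guarantees $f(P_i) \to 1$ as $P_i \to \infty$. The task thus reduces to choosing each $M_i$ with $f(M_i) \approx y_i$ while preserving the admissibility required to apply Dickson's conjecture simultaneously to $n+\beta_j$ ($1 \leq j \leq m$) and $(n+\alpha_i)/M_i$ ($1 \leq i \leq d$).

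I would first establish an approximation lemma asserting that the partial products $\prod_{p \in T} f(p)$, taken over finite subsets $T \subseteq \S$, are dense in $(0,1]$. Hypothesis (1) makes successive partial products differ by nearly-unit ratios, while hypothesis (2) drives them down to $0$; a telescoping argument then produces, for any target $y \in (0,1]$ and tolerance $\delta > 0$, a finite $T \subseteq \S$ with $\bigl|\prod_{p \in T} f(p) - y\bigr| < \delta$. Using pairwise disjoint such sets $T_1, \dots, T_d$ consisting of ``medium'' primes (all exceeding $m+d$ and large enough to avoid congruence clashes with the $\alpha$'s and $\beta$'s) one obtains coprime medium-prime factors $M_i^{\mathrm{m}} := \prod_{p \in T_i} p$ with prescribed $f$-values in $(0,1]$.

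Next I would set up the small-prime structure. For each prime $p \leq m+d$, admissibility of $(\beta_1,\dots,\beta_m)$ furnishes a residue class modulo $p$ avoiding every $-\beta_j$; refining this choice to a suitable depth in $p$ pins down each $p$-part of $n + \alpha_i$, producing a well-defined small-prime factor $M_i^{\mathrm{s}}$ whose exponents $v_p(M_i^{\mathrm{s}})$ lie in $\{0,1,\dots,\lfloor \log_2 d\rfloor+1\}$. Only primes $p_1,\dots,p_{\pi(m+d)}$ appear in $M_i^{\mathrm{s}}$, so $M_i^{\mathrm{s}}$ is one of the integers $M = \prod_j p_j^{x_j}$ appearing in the minimum defining $r$, whence $f(M_i^{\mathrm{s}}) \geq r$. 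Setting $M_i := M_i^{\mathrm{s}} M_i^{\mathrm{m}}$ and selecting each $T_i$ so that $f(M_i^{\mathrm{m}})$ approximates $y_i / f(M_i^{\mathrm{s}}) \in (0,1]$ gives $|f(M_i) - y_i| < \varepsilon/2$ for any $(y_1,\dots,y_d) \in [0,r]^d$.

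Finally, apply Dickson's conjecture to the $m+d$ linear polynomials
\[
t \longmapsto n_0 + \beta_j + tQ \quad (1\leq j\leq m), \qquad t \longmapsto \frac{n_0+\alpha_i}{M_i} + t\cdot\frac{Q}{M_i} \quad (1\leq i\leq d),
\]
where $Q$ is the CRT modulus and $n_0$ is the CRT solution. All leading coefficients are positive integers, and admissibility of the product polynomial is verified in two regimes: primes $p > m+d$ contribute at most $m+d < p$ roots across all factors, leaving a valid residue, while for each $p \leq m+d$ the CRT construction makes every polynomial nonvanishing at $t=0$ modulo $p$. Dickson then yields infinitely many $t$ with all $m+d$ polynomials simultaneously prime; taking $t$ large makes $|f(P_i)-1|$ uniformly small, and combining with the estimate on $f(M_i)$ gives $|f(n+\alpha_i) - y_i| < \varepsilon$, proving density in $[0,r]^d$. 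The main obstacle is the uniform small-prime admissibility bookkeeping, which is precisely what forces the specific form of $r$ in terms of the first $\pi(m+d)$ primes with exponents bounded by $\lfloor \log_2 d\rfloor + 1$.
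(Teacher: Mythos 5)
Your proposal follows the same structural blueprint as the paper: decompose $n+\alpha_i$ into a controlled small-prime factor (your $M_i^{\mathrm{s}}$, the paper's $\prod_j p_j^{x_{i,j}}$), a squarefree medium-prime factor chosen via a density-of-partial-products lemma (your $M_i^{\mathrm{m}}$, the paper's $w_i$ supplied by Lemma~\ref{Lemma:W}), and a large prime supplied by Dickson's conjecture; glue these via CRT; and verify the Dickson hypothesis prime by prime. The approximation lemma you sketch and the role of $r$ match the paper's Lemmas~\ref{Lemma:Ratio}--\ref{Lemma:W} and the bound $f(M_i^{\mathrm{s}}) \geq r$.

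There is, however, a genuine gap in the admissibility check for primes $p > m+d$. You argue that $F$ has degree $m+d < p$, hence at most $m+d$ roots modulo $p$, so some residue class survives. That reasoning presupposes that $F$ is a \emph{nonzero} polynomial modulo $p$: if some linear factor of $F$ has both of its coefficients divisible by $p$, then $F$ vanishes identically modulo $p$ and has $p$ roots, not $\leq m+d$. Ruling out this scenario is precisely the content of the large-prime case, and it is where the finite exclusion set $\P'$ of \eqref{eq:PP} earns its keep. Concretely: if some linear factor vanishes identically modulo $p > m+d$, then $p$ divides the leading coefficient, forcing $p \mid w_k$ for some $k$; the CRT congruence \eqref{eq:ca} then gives $c \equiv -\alpha_k \pmod{p}$, and nonvanishing of each $h_i$ and $g_i$ reduces to $p \nmid (\beta_i - \alpha_k)$, $p \nmid (\alpha_i - \alpha_k)$, and $(c+\alpha_k)/w_k \not\equiv 0 \pmod p$ — all of which hold \emph{because} $w_k$ was built to avoid $\P'$. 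Your parenthetical remark that the medium primes should be ``large enough to avoid congruence clashes with the $\alpha$'s and $\beta$'s'' names the right ingredient, but the ``at most $m+d$ roots'' argument you actually give does not invoke it, and the step where a vanishing linear factor is traced back to a $w_k$ and then refuted is missing. A smaller point of the same flavor: your claim that nonvanishing at $t=0$ suffices for $p \leq m+d$ is correct only because each polynomial is \emph{constant} modulo $p_j$, which in turn depends on $p_j^{s+1}$ dividing the CRT modulus while only $p_j^{x_{i,j}}$ with $x_{i,j} \leq s$ is divided out of the leading coefficient; this should be said explicitly.
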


This paper is organized as follows.  In Section \ref{Section:Applications} we present a wide array of examples
and applications of Theorems \ref{Theorem:Main} and \ref{Theorem:Primary}.
We present several necessary lemmas in Section \ref{Section:Preliminaries}
before proceeding to Section \ref{Section:Primary},
which concerns the proof of Theorem \ref{Theorem:Primary}.
The proof of Theorem \ref{Theorem:Main} is contained in Section \ref{Section:Proof}.  We conclude with several remarks and open problems in Section \ref{Section:Further}.

\section{Examples and Applications}\label{Section:Applications}
We demonstrate that a wide variety of known and novel
results follow from Theorems \ref{Theorem:Main} and \ref{Theorem:Primary}. 
Since there are so many consequences of these theorems, we split the following list of examples
into one-dimensional and multidimensional categories.  In particular, we highlight some striking numerical examples
which illustrate that our method of proof narrows down the search for suitable prime tuples to the extent that
the relevant computations are feasible on a standard laptop computer.

\subsection{One-dimensional results}
Before we recover all of the main one-dimensional results from \cite{PRBTP2}, 
we first direct the reader to Table \ref{Table:1d}, which contains a few curious examples.
As usual, we assume the truth of Dickson's conjecture.

\begin{table}\small
\begin{equation*}
\begin{array}{c|c|c|c}
\xi	& a_1,a_2 & p & \frac{\phi(p+a_2)}{\phi(p+a_1)} \\ 
	\hline 
	\gamma & -1, 1 & 95674157816864951038010948990752780001 & \underline{0.57721566490153}0\ldots  \\[2pt]
	\pi & 5, 16 & 12029840180666026511494250079901  & \underline{3.1415926535}5768\ldots \\[2pt]
	e & 11, 16 & 106784808714334981809995191 & \underline{2.71828182}788915\ldots  \\	
\end{array}
\end{equation*}
\caption{Here $p,p+6,p+12,p+18$ are prime and $\phi(p+a_2)/\phi(p+a_1)$ closely approximates
a fundamental mathematical constant.  Underlined digits agree with those of the constant in question.}
\label{Table:1d}
\end{table}

\begin{example}
Theorem \ref{Theorem:Main} with $f(n) = \phi(n)/n$, $h(n)=n$, $\alpha_1 = -1$, $\alpha_2 = 1$, $\beta_1=0$, and $\beta_2 = 2$,
implies \cite[Thm.~1]{PRBTP2}:
\begin{equation*}
\left\{ \frac{\phi(p+1)}{\phi(p-1)} : p,p+2 \in \P\right\}\quad
\text{is dense in $[0,\infty)$}.
\end{equation*}
\end{example}

\begin{example}
Apply Theorem \ref{Theorem:Main} to $f(n) = \sigma(n)/n$, 
 in which $\sigma(n) = \sum_{d|n} d$,
with the same $h$, $\alpha_i$, and $\beta_i$, as in 
the previous example, and obtain \cite[Thm.~4a]{PRBTP2}: 
\begin{equation*}
\left\{ \frac{\sigma(p+1)}{\sigma(p-1)} : p,p+2 \in \P\right\}\quad
\text{is dense in $[0,\infty)$}.
\end{equation*}
\end{example}

\begin{example}\label{Example:Bad}
Since $\limsup_{p\to\infty} \phi(p+1) / \phi(p) = \frac{1}{2}$,
we do not expect a straightforward prime version of Schinzel's result \eqref{eq:SchinzelOld}
(it is known unconditionally that $\{ \phi(p+1) / \phi(p) : p \in \P\}$ is dense in $[0,\frac{1}{2}]$ \cite[Thm.~2]{PRBTP2}).
However, we can obtain shifted Schinzel-type results with primality restrictions, such as
\begin{equation*}
\left\{ \frac{\phi(n+1)}{\phi(n)} : n+7,n+9 \in \P\right\}\quad
\text{is dense in $[0,\infty)$}.
\end{equation*}
\end{example}

\begin{example}
Let $f(n) = \phi(n)/n$ in Theorem \ref{Theorem:Primary} with $\alpha_1 = 1$, $\beta_1=0$, and $\beta_2 = 2$. 
Then $m+d = 3$, $\pi(m+d)=2$, and $\lfloor \log_2 d \rfloor +1 =1$.
Since
\begin{equation*}
r=\min\left\{\dfrac{\phi(2\cdot 3)}{2 \cdot 3},\dfrac{\phi(2)}{2},\dfrac{\phi(3)}{3},\dfrac{1}{1}\right\}=\frac{1}{3},
\end{equation*}
Theorem \ref{Theorem:Primary} implies 
$\{ \phi(p+1)/(p+1): p,p+2 \in \P\}$ is dense in $[0,\frac{1}{3}]$
and hence we recover \cite[Thm.~3]{PRBTP2}:
\begin{equation*}
\left\{\dfrac{\phi(p+1)}{\phi(p)}: p,p+2 \in \P\right\}\quad
\text{is dense in $[0,\tfrac{1}{3}]$}. 
\end{equation*}
\end{example}

\begin{example}
Let $f(n) = n/\sigma(n)$ and use the same parameters as the previous example.  Since
$r= \min\{\frac{6}{\sigma(6)},\frac{2}{\sigma(2)},\frac{3}{\sigma(3)},\frac{1}{1}\}=\frac{1}{2}$,
Theorem \ref{Theorem:Primary} implies
$\{\frac{ p+1}{\sigma(p+1)}: p,p+2 \in \P\}$ is dense in $[0,\tfrac{1}{2}]$
and we recover \cite[Thm.~4c]{PRBTP2}:
\begin{equation*}
\left\{\dfrac{\sigma(p+1)}{\sigma(p)}: p,p+2 \in \P\right\}\quad
\text{is dense in $[2,\infty)$}. 
\end{equation*}
\end{example}

\subsection{Higher-dimensional generalizations}
Theorem \ref{Theorem:Main} permits higher-dimensional generalizations
of the key results of \cite{PRBTP2}.  These are prime analogues of
Schinzel's seminal result \eqref{eq:Schinzel}.  Table \ref{Table:2d} displays a variety of appealing examples.
In what follows,  we assume the truth of Dickson's conjecture.

\begin{table}\footnotesize
	\begin{tabular}{p{1cm}|c|p{1.5cm}|p{1.8cm}|p{1.8cm}|p{1.4cm}|p{1.4cm}}
	$\xi_1$ & $\xi_2$ & $\beta_1,\beta_2,\ldots$ & $\alpha_1,\alpha_2,\alpha_3$ & $p$ & $\frac{\phi(p+a_2)}{\phi(p+a_1)}$ & $\frac{\phi(p+a_3)}{\phi(p+a_1)}$ \\ 	
	\hline 
	$\sqrt{2}$ & $\sqrt{3}$ & 0,2 & 43, 67, 163 & \bignumber{7511844784494160990486495708935278184940965989331896977463504533990177620200653044436211} &\uline{\bignumber{1.414213562373095048801688}}\bignumber{850439}\ldots & \uline{\bignumber{1.73205080756887729352}}\bignumber{8311231013}\ldots \\ 
\hline	
	$\frac{\sqrt{5}+1}{2}$ & $\frac{\sqrt{5}-1}{2}$ & $0,2$ & $5,8,13$ & \seqsplit{13008439144450632672234079283299510995557205376306391}  & \uline{\bignumber{1.6180339887498948}}\bignumber{70}\ldots  & \uline{\bignumber{0.6180339887498948}}\bignumber{557}\ldots   \\ 
	\hline 
	$\int_0^1 x^x\,dx$& $\int_0^1 \frac{1}{x^x}\,dx$ & $0,10, 12, 64$, $88$ & $561, 1105, 1729$ & \bignumber{9188455696503721950121061053683259795883948157898722348949859809}& \uline{\bignumber{0.783430510712134}}\bignumber{508863}\ldots& \uline{\bignumber{1.2912859970626635404}}\bignumber{3969}\ldots \\ 	
\hline
$e/10$&$\pi/10$& 0, 2, 56, 80, 196884 & 314, 159, 265 & \bignumber{9613597587126448065138098030260432768517}  & \uline{\bignumber{0.27182818}}\bignumber{317353332418}\ldots & \uline{\bignumber{0.314159265}}\bignumber{83582612337}\ldots\\ 	
\end{tabular}
\caption{Here $p+\beta_1, p+\beta_2,\ldots$ are prime and $(\frac{\phi(p+a_2)}{\phi(p+a_1)},\frac{\phi(p+a_3)}{\phi(p+a_1)})$ closely approximates
$(\xi_1,\xi_2)$.  Underlined digits agree with those of the constants in question.  Note that
$43$, $67$, and $163$ are the largest Heegner numbers; $5$, $8$, and $13$ are Fibonacci numbers;
$561$, $1105$, and $1729$ are the first three Carmichael numbers; and
$196884$ is the coefficient of $q$ in the Fourier expansion of the $J$-invariant (as in Monstrous Moonshine).
}
\label{Table:2d}
\end{table}

\begin{example}
Apply Theorem \ref{Theorem:Main} to $f(n) = \phi(n)/n$ and deduce that both
\begin{equation*}
\left\{\left(\frac{\phi(n+\alpha_1)}{\phi(n+\alpha_2)}, \frac{\phi(n+\alpha_2)}{\phi(n+\alpha_3)}, \dots, \frac{\phi(n+\alpha_{d-1})}{\phi(n+\alpha_{d})}\right): n+\beta_1,n+\beta_2,\ldots, n+\beta_m \in \P\right\}
\end{equation*}
and
\begin{equation*}
\left\{\left(\frac{\phi(n+\alpha_2)}{\phi(n+\alpha_1)}, \frac{\phi(n+\alpha_3)}{\phi(n+\alpha_1)}, \dots, \frac{\phi(n+\alpha_{d})}{\phi(n+\alpha_1)}\right): n+\beta_1,n+\beta_2,\ldots, n+\beta_m \in \P\right\}
\end{equation*}
are dense in $[0,\infty)^{d-1}$
for any distinct $\alpha_1,\alpha_2,\ldots, \alpha_{d}\in \Z$ and admissible 
$(\beta_1, \beta_2, \ldots, \beta_m)$ with $\alpha_i\neq \beta_j$ 
 for $1 \leq i \leq d$ and $1 \leq j \leq m$.
These are prime analogues of Schinzel's theorem \eqref{eq:Schinzel}.
In a similar manner, these results hold for $\sigma$ as well.
\end{example}

\begin{example}
Let 
\begin{equation*}
f(n) = \exp\bigg(\sum_{ p \in \P} \frac{\nu_{p}(n)}{p} \bigg)
\end{equation*}
and $h(n)=1$, in which $\nu_{p}$ is the $p$-adic valuation.  Then
$f(p) = e^{1/p} \to 1$ and
\begin{equation*}
\prod_{p \in \P}f(p) = \prod_{i=1}^{\infty} \exp\bigg(\frac{1}{p}\bigg)
= \exp\bigg(\sum_{i=1}^\infty \frac{1}{p}\bigg)
\end{equation*}
diverges.  Then Theorem \ref{Theorem:Main} implies
\begin{equation*}
\left\{\left(\frac{f(n+\alpha_1)}{f(n+\alpha_2)}, \frac{f(n+\alpha_2)}{f(n+\alpha_3)}, \ldots, \frac{f(n+\alpha_{d-1})}{f(n+\alpha_{d})}\right): n+\beta_1,n+\beta_2,\ldots, n+\beta_m \in \P \right\}
\end{equation*}
and
\begin{equation*}
\left\{\left(\frac{f(n+\alpha_2)}{f(n+\alpha_1)}, \frac{f(n+\alpha_3)}{f(n+\alpha_1)}, \dots, \frac{f(n+\alpha_{d})}{f(n+\alpha_1)}\right): n+\beta_1,n+\beta_2,\ldots, n+\beta_m \in \P\right\}  
\end{equation*}
are dense in $[0,\infty)^{d-1}$
for any distinct $\alpha_1,\alpha_2,\ldots, \alpha_{d}\in \Z$ and admissible 
$(\beta_1, \beta_2, \ldots, \beta_m)$ with $\alpha_i\neq \beta_j$ 
 for $1 \leq i \leq d$ and $1 \leq j \leq m$.
\end{example}

\section{Preliminaries}\label{Section:Preliminaries}
The following lemma is essentially due to Schinzel \cite[Lem.~1]{Schinzel3}, except that here
we insist upon the extra condition $\ell_k > n_k$ and we consider $0<C<1$ instead of $C>1$.
We provide the proof here because of these modifications.

\begin{lemma}\label{Lemma:Ratio}
    Let $u_n$ denote an infinite sequence of real numbers such that 
    \begin{equation*}
        \lim_{n\to  \infty} u_n = 0 \qquad\text{and}\qquad \lim_{n\to  \infty} \dfrac{u_{n+1}}{u_n}=1.
    \end{equation*}
    For each $0<C<1$ and strictly increasing sequence ${n_k}$ in $\N$, there exists 
    $\ell_k \in \N$ such that 
    \begin{equation*}
        \ell_k > n_k \quad\text{for $k=1,2,3\dots$} \qquad\text{and}\qquad 
        \lim_{k \to  \infty} \dfrac{u_{\ell_k}}{u_{n_k}}=C.
    \end{equation*}
\end{lemma}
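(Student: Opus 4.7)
The plan is to construct $\ell_k$ by a first-passage argument: for each $k$ (with $n_k$ large enough that the $u_n$ have stabilized in sign), let $\ell_k$ be the \emph{smallest} integer strictly larger than $n_k$ for which $u_{\ell_k}/u_{n_k} \leq C$. This automatically enforces the constraint $\ell_k > n_k$, and such an $\ell_k$ exists because $u_n \to 0$ while $u_{n_k}$ is nonzero. The point is that this discrete ``first crossing'' cannot overshoot $C$ by much, because the ratios $u_{n+1}/u_n$ are close to $1$.

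First I would address a minor setup issue: since $u_{n+1}/u_n \to 1 > 0$ and $u_n \to 0$, the $u_n$ must be eventually nonzero and of constant sign, so I may assume (replacing $u_n$ by $-u_n$ if needed) that $u_n > 0$ for all $n \geq N$. For the finitely many $k$ with $n_k < N$ I simply set $\ell_k = n_k + 1$, which affects only finitely many terms and hence not the limit. For $n_k \geq N$ I carry out the construction above.

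Next, the key estimate. By minimality of $\ell_k$, we have $u_{\ell_k - 1}/u_{n_k} > C$, so
\[
\frac{u_{\ell_k}}{u_{n_k}} \;=\; \frac{u_{\ell_k - 1}}{u_{n_k}} \cdot \frac{u_{\ell_k}}{u_{\ell_k - 1}} \;>\; C \cdot \frac{u_{\ell_k}}{u_{\ell_k - 1}},
\]
while by definition $u_{\ell_k}/u_{n_k} \leq C$. This sandwiches $u_{\ell_k}/u_{n_k}$ between $C\cdot (u_{\ell_k}/u_{\ell_k - 1})$ and $C$.

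Finally I would conclude by squeeze: since $\ell_k > n_k \to \infty$, the hypothesis $u_{n+1}/u_n \to 1$ gives $u_{\ell_k}/u_{\ell_k - 1} \to 1$, so the lower bound $C\cdot (u_{\ell_k}/u_{\ell_k-1})$ tends to $C$ and we obtain $u_{\ell_k}/u_{n_k} \to C$, as desired. The main obstacle I anticipate is essentially bookkeeping: making sure the first-crossing index is well-defined and strictly greater than $n_k$ (handled by taking the minimum over $m > n_k$), and making sure the argument is not derailed by an early regime in which $u_n$ might change sign or vanish (handled by the WLOG-positivity reduction above). Once these are in place, the two-sided estimate and squeeze are routine.
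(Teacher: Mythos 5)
Your proof is correct and follows essentially the same first-crossing argument as the paper: define $\ell_k$ as the least index past $n_k$ where the ratio drops to or below $C$, use minimality to get the lower bound $C\cdot u_{\ell_k}/u_{\ell_k-1}$, and squeeze. The only difference is that you explicitly handle the eventual-positivity issue (and the finitely many small $k$), which the paper leaves implicit since in its application $u_n$ is always positive; this is a minor but legitimate tightening of the write-up rather than a different approach.
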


\begin{proof}
For $k\in \N$, let $\ell_k \geq n_k$ be the least natural number such that 
$$\frac{u_{l_k}}{u_{n_k}}\leq C.$$ Such a number exists because $\lim_{n\to \infty} u_n = 0$. Furthermore, $\ell_k>n_k$ because $C<u_{n_k} / u_{n_k}=1$ by assumption. 
The minimality of $\ell_k$ ensures that
    \begin{equation}\label{eq:Subtle}
        C< \frac{u_{\ell_k-1}}{u_{n_k}} 
    \end{equation}
    and hence
    \begin{equation*}
        C\dfrac{u_{\ell_k}}{u_{\ell_k-1}}
        = \left(C  \frac{u_{n_k}}{u_{\ell_k-1}}\right) \dfrac{u_{\ell_k}}{u_{n_k}}
        <\dfrac{u_{\ell_k}}{u_{n_k}}\leq C.
    \end{equation*}
    Thus,
    \begin{equation*}
        \lim_{k \to  \infty} \dfrac{u_{\ell_k}}{u_{n_k}}=C. \qedhere
    \end{equation*}
\end{proof} 

The next lemma is a generalization of \cite[Lem.~5]{PRBTP2} (see also \cite[Prop. 8.8]{deKoninck}).

\begin{lemma}\label{Lemma:Finite}
    Let $f$ be a positive multiplicative function such that $\lim_{p \to {\infty}}f(p)=1$ and 
    $\prod_{p \in \S}f(p)$ diverges to zero for some $\S \subset \P$.  
    For any finite subset $\P'\subset \P$, 
    \begin{equation*}
        \{f(n): \text{$n$ squarefree, $p\nmid n$ for all $p\in \P'$}\} \quad 
        \text{is dense in $[0,1]$}.
    \end{equation*}
\end{lemma}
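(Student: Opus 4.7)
The plan is to reduce to Lemma~\ref{Lemma:Ratio} by considering squarefree products of carefully chosen primes in $\S$ that avoid $\P'$. Let $\S' = \S \setminus \P'$. Since $\P'$ is finite and each $f(p)$ is a positive real, $\prod_{p \in \S \cap \P'} f(p)$ is a positive constant, so $\prod_{p \in \S'} f(p)$ still diverges to $0$; in particular $\S'$ is infinite. Enumerate $\S' = \{q_1 < q_2 < \cdots\}$ in increasing order and put
\[
u_k \;=\; \prod_{i=1}^{k} f(q_i) \;=\; f(q_1 q_2 \cdots q_k).
\]
Each $q_1 q_2 \cdots q_k$ is squarefree and has no prime factor in $\P'$, so every $u_k$ lies in the target set. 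Divergence of the product to $0$ gives $u_k \to 0$, and since $q_{k+1} \to \infty$ together with $f(p) \to 1$ yields $u_{k+1}/u_k = f(q_{k+1}) \to 1$, the sequence $\{u_k\}$ satisfies the hypotheses of Lemma~\ref{Lemma:Ratio}.

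Density in $[0,1]$ then follows by cases on $\alpha$. The endpoints are immediate: $\alpha = 1$ is hit exactly by $n = 1$, while $u_k \to 0$ shows $\alpha = 0$ is a limit point. For any $\alpha \in (0,1)$, apply Lemma~\ref{Lemma:Ratio} to $\{u_k\}$ with $C = \alpha$ and the strictly increasing sequence $n_k = k$; this produces $\ell_k > k$ with $u_{\ell_k}/u_k \to \alpha$. But
\[
\frac{u_{\ell_k}}{u_k} \;=\; \prod_{i=k+1}^{\ell_k} f(q_i) \;=\; f\bigl(q_{k+1} q_{k+2} \cdots q_{\ell_k}\bigr),
\]
and the integer $q_{k+1} q_{k+2} \cdots q_{\ell_k}$ is squarefree with no prime factor in $\P'$. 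Hence values of $f$ on admissible squarefree integers accumulate at $\alpha$, completing the proof.

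The only step that requires a moment's thought is the first one---confirming that deleting a finite set $\P'$ of primes cannot rescue $\prod_{p \in \S'} f(p)$ from diverging to $0$---but this is immediate from positivity of $f$ and finiteness of $\P'$. Everything else is bookkeeping around Lemma~\ref{Lemma:Ratio}: the clever choice $n_k = k$ turns ``ratios of partial products'' into ``$f$-values of tail products,'' which is exactly what the statement demands.
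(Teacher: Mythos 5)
Your proof is correct and matches the paper's own argument almost step for step: enumerate $\S \setminus \P'$, form the partial products $u_k$, invoke Lemma~\ref{Lemma:Ratio}, and observe that $u_{\ell_k}/u_{n_k} = f(q_{n_k+1}\cdots q_{\ell_k})$ is the $f$-value of a squarefree integer avoiding $\P'$. If anything you are slightly more careful than the paper in two small spots: you explicitly note that deleting the finite set $\P'$ cannot rescue the divergence of the product, and you handle the endpoints $0$ and $1$ separately; the paper leaves both implicit.
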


\begin{proof}
    Let $q_i$ denote the $i$th smallest prime in the infinite set $\S \backslash \P'$.  Define
    \begin{equation*}
        u_n= \prod_{i=1}^n f(q_i),
    \end{equation*}
    which tends to zero as $n\to \infty$ and satisfies
    \begin{equation*}
        \lim_{n\to  \infty} \dfrac{u_{n+1}}{u_n}=\lim_{n\to  \infty}f(q_{n+1})=1.
    \end{equation*}
    Let $n_k$ be an increasing sequence in $\N$ and $0<C<1$.
    Lemma \ref{Lemma:Ratio} provides a sequence $\ell_n$ in $\N$ such that
    \begin{equation*}
        \ell_k > n_k \quad\text{for $k=1,2,\ldots$} 
        \qquad\text{and}\qquad 
        \lim_{k \to  \infty} \dfrac{u_{\ell_k}}{u_{n_k}}=C.
    \end{equation*}
    Then $w_k= \prod_{i=n_k+1}^{\ell_k} q_i$ is squarefree, not divisible by any element of $\P'$,
    and satisfies
    \begin{equation*}
        f(w_k)= f\left(  \prod_{i=n_k+1}^{\ell_k} q_i \right) = \prod_{i=n_k+1}^{\ell_k} f(q_i)
        =\dfrac{ \prod_{i=1}^{\ell_k} f(q_i)}{ \prod_{i=1}^{n_k} f(q_i)}=\dfrac {u_{\ell_k}}{u_{n_k}} \to C. \qedhere
    \end{equation*}
\end{proof}

\begin{lemma}\label{Lemma:W}
    Let $f$ be a positive multiplicative function such that $\lim_{p \to {\infty}}f(p)=1$ and 
    $\prod_{p \in \S}f(p)$ diverges to zero for some $\S \subset \P$.  
    For any finite  $\P'\subset \P$, the set of $d$-tuples
    $(f(w_1),f(w_2),...,f(w_d))$ such that
    \begin{enumerate}
        \item $w_1,w_2,\ldots,w_d \in \N$ are squarefree and pairwise relatively prime, and
        \item $p\nmid w_i$ for $p \in \P'$ and $1 \leq i \leq d$,
    \end{enumerate}
    is dense in $[0,1]^d$
\end{lemma}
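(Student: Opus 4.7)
The plan is to deduce Lemma \ref{Lemma:W} from Lemma \ref{Lemma:Finite} by a straightforward inductive construction, where the coprimality condition is enforced by progressively enlarging the finite set of forbidden primes.

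Fix a target $(c_1,c_2,\ldots,c_d)\in[0,1]^d$ and $\epsilon>0$. I would build $w_1,w_2,\ldots,w_d$ one at a time. Set $\P'_0=\P'$. At step $i$, assuming $w_1,\ldots,w_{i-1}$ have already been constructed to be squarefree, pairwise coprime, not divisible by any prime of $\P'$, and satisfying $|f(w_j)-c_j|<\epsilon$ for $j<i$, define
\[
\P'_{i-1} \;=\; \P' \;\cup\; \{p\in\P : p\mid w_1w_2\cdots w_{i-1}\},
\]
which is a \emph{finite} subset of $\P$ because each $w_j$ has only finitely many prime divisors. Apply Lemma \ref{Lemma:Finite} to this finite set $\P'_{i-1}$ to obtain a squarefree $w_i\in\N$, not divisible by any prime in $\P'_{i-1}$, with $|f(w_i)-c_i|<\epsilon$.

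By construction, $w_i$ shares no prime factor with any of $w_1,\ldots,w_{i-1}$, so after $d$ steps the tuple $(w_1,w_2,\ldots,w_d)$ is pairwise coprime, each $w_i$ is squarefree, and $p\nmid w_i$ for every $p\in\P'$ and every $i$. Moreover $|f(w_i)-c_i|<\epsilon$ for $1\le i\le d$, so $(f(w_1),\ldots,f(w_d))$ lies within $\epsilon\sqrt{d}$ of $(c_1,\ldots,c_d)$ in the Euclidean norm. Since $(c_1,\ldots,c_d)\in[0,1]^d$ and $\epsilon>0$ were arbitrary, the claimed density follows.

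There is no real obstacle here: the entire argument rests on the observation that Lemma \ref{Lemma:Finite} permits \emph{any} finite exclusion set, so coprimality can be engineered by folding the prime factors of each newly constructed $w_i$ into the exclusion set before constructing $w_{i+1}$. The only point worth flagging explicitly in the write-up is that $\P'_{i-1}$ remains finite at every stage, which is immediate since each $w_j$ is a single natural number.
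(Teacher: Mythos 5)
Your proof is correct and takes essentially the same approach as the paper: the paper's proof of Lemma~\ref{Lemma:W} is a formal induction on $d$ in which, at each step, the exclusion set $\P'$ is enlarged to include the prime divisors of the $w_j$ already constructed, and Lemma~\ref{Lemma:Finite} is then applied to this enlarged finite set. Your iterative construction with $\P'_{i-1}$ is the same argument phrased greedily rather than inductively.
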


\begin{proof}
    We proceed by induction on $d$.
    If $I_1 \subset [0,1]$ is an open interval,
    Lemma \ref{Lemma:Finite} provides a
    squarefree $w_1$ such that $f(w_1) \in I_1$ and $p \nmid w_1$ for all $p \in \P'$.
    Let $I_1, I_2,\dots,I_d \subset [0,1]$ be open intervals and suppose that 
    there are squarefree, pairwise relatively prime $w_1,w_2,\ldots,w_{d-1}$ such that
    $p\nmid w_i$ for all $p\in \P'$ and $f(w_i) \in I_i$ for $1 \leq i \leq d-1$.
    Let $\P''$ be the union of $\P'$ with the set of divisors of $w_1,w_2,\ldots, w_{d-1}$. 
    Lemma \ref{Lemma:Finite} provides a squarefree $w_d$, coprime to $w_1,w_1,\ldots,w_{d-1}$,
    such that $(f(w_1),f(w_2),\dots, f(w_d))\in I_1\times I_2\times\cdots \times I_d$
    and $p\nmid w_d$ for each $p\in \P''\supset\P'$.
    This concludes the induction.
\end{proof}

The next lemma provides a simple method to pass between results about sets of the form \eqref{eq:S1}
and \eqref{eq:S2}.

\begin{lemma}\label{Lemma:Homeo}
The function $\Phi:(0,\infty)^{d-1}\to (0,\infty)^{d-1}$ defined by 
\begin{equation*}
\Phi(y_1,y_2,\dots,y_{d-1}) = \bigg(\frac{1}{y_1},\frac{y_1}{y_2},\frac{y_2}{y_3},\dots, \frac{y_{d-2}}{y_{d-1}} \bigg)
\end{equation*}
is a homeomorphism.
\end{lemma}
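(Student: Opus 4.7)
The plan is to write down an explicit two-sided inverse for $\Phi$ and observe that both $\Phi$ and this inverse are continuous, since their coordinate functions are rational and have strictly positive denominators on the open orthant $(0,\infty)^{d-1}$.

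First, continuity of $\Phi$ is immediate: each coordinate function $y_1\mapsto 1/y_1$ and $(y_{j-1},y_j)\mapsto y_{j-1}/y_j$ is a quotient of monomials whose denominator does not vanish on $(0,\infty)^{d-1}$. To construct the inverse, I would set $(z_1,\ldots,z_{d-1})=\Phi(y_1,\ldots,y_{d-1})$ and solve recursively: from $z_1=1/y_1$ one gets $y_1=1/z_1$, and from $z_j=y_{j-1}/y_j$ for $j\geq 2$ one gets $y_j=y_{j-1}/z_j$. Unrolling the recursion yields
\begin{equation*}
\Phi^{-1}(z_1,z_2,\dots,z_{d-1})=\bigg(\frac{1}{z_1},\frac{1}{z_1 z_2},\dots,\frac{1}{z_1 z_2\cdots z_{d-1}}\bigg).
\end{equation*}
This map sends $(0,\infty)^{d-1}$ to itself, and its coordinate functions are again rational with nowhere-vanishing denominators, so it is continuous.

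It remains to verify that the two compositions $\Phi\circ\Phi^{-1}$ and $\Phi^{-1}\circ\Phi$ reduce to the identity; this is a direct algebraic check in which successive ratios telescope. There is no real obstacle here — the proof is essentially bookkeeping — and the lemma then follows.
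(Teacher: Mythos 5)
Your proposal is correct and takes essentially the same route as the paper: you construct the same explicit inverse $\Psi(z_1,\dots,z_{d-1})=\bigl(1/z_1,\,1/(z_1z_2),\dots,1/(z_1\cdots z_{d-1})\bigr)$ and observe that both $\Phi$ and $\Psi$ are continuous because the denominators never vanish on the open orthant. The only difference is that you spell out the recursion that produces $\Psi$, which the paper leaves implicit.
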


\begin{proof}
Since $\Phi$ is continuous, it suffices to observe that
the continuous function $\Psi:(0,\infty)^{d-1} \to (0,\infty)^{d-1}$ 
\begin{equation*}
\Psi(x_1,x_2,\dots,x_{d-1}) = \bigg(\frac{1}{x_1},\frac{1}{x_1 x_2},\frac{1}{x_1 x_2 x_3},\ldots, \frac{1}{\prod_{i=1}^{d-1} x_i} \bigg)
\end{equation*}
inverts $\Phi$.
\end{proof}


\section{Proof of Theorem \ref{Theorem:Primary}}\label{Section:Primary}
We break the proof of Theorem \ref{Theorem:Primary}
into a number of subsections for clarity.  This organization highlights the particular
parameters involved at each stage.

\subsection{Initial Setup and Outline}\label{Subsection:Lem1}
Suppose $f$ is a positive multiplicative function satisfying hypotheses (a) and (b) of Theorem \ref{Theorem:Primary}.
Let $\alpha_1,\alpha_2,\ldots \alpha_d \in \Z$ be distinct, let $(\beta_1,\beta_2,\ldots,\beta_m)$ 
be an admissible $m$-tuple with $\alpha_i\neq \beta_j$ for $1 \leq i \leq d$ and $1 \leq j \leq m$.
Define $L=\pi(m+d)$ and let $r$ be given by \eqref{eq:r}.

It suffices to show that for each $\epsilon >0$ and $\xi=(\xi_1,\xi_2,\dots,\xi_d) \in [0,r]^d$, 
there is an $n \in \N$ such that $n+\beta_j$ is prime for $1 \leq j \leq m$ and 
$f(n+\alpha_i)\in (\xi_i(1-\epsilon),\xi_i(1+\epsilon))$ for each $1 \leq i \leq d$.

\subsection{The integers $b_1,b_2,\ldots, b_L$}
Since $(\beta_1,\beta_2,\dots, \beta_m)$ is an admissible $m$-tuple,
\begin{equation*}
P(t)=(t+\beta_1)(t+\beta_2)\cdots(t+\beta_{m})
\end{equation*}
does not vanish identically modulo any prime.  Consequently, for each $p_j$ with $j=1,2,\ldots,L$, there is some $b_j \in \Z$ 
such that $P(b_j)\not\equiv 0 \pmod{p_j}$ and hence
\begin{equation}\label{defb}
p_j \nmid (b_j+\beta_i) \quad \text{for $1 \leq i\leq m$}. 
\end{equation}

\subsection{The exponents $x_{i,j}$}
Let 
\begin{equation*}
s =\lfloor\log_2 d \rfloor +1
\end{equation*}
and observe that $p_{j}^s \geq 2^s > d$ for each $j \in \N$.  Since there are precisely 
$p_j^s$ multiples of $p_j$ modulo $p_j^{s+1}$, there is an $e_j \in \Z$ such that
\begin{equation*}
    \alpha_i+e_jp_j+b_j \not\equiv 0 \pmod{p_j^{s+1}} \quad \text{for $1 \leq i \leq d$}.
\end{equation*}
Define
\begin{equation}\label{eq:xdef}
x_{i,j}=\max\big\{y\,:\, p_j^y\!\mid\!( \alpha_i+e_jp_j+b_j)\big\}
\end{equation}
and observe that $x_{i,j} \leq s$ for $1 \leq i\leq d$ and $1 \leq j \leq L$.

\subsection{The intervals $I_1,I_2,\ldots,I_d$}
For $i=1,2,\ldots,d$, define
\begin{equation}\label{eq:I}
    I_i = \left(\xi_i\frac{1-\epsilon}{f(\prod_{j=1}^L p_j^{x_{i,j}})},\xi_i\frac{1+\epsilon}{f(\prod_{j=1}^L p_j^{x_{i,j}})} \right) \cap (0,1).
\end{equation}
Since $\xi_i \in [0,r]$ and $0 < r \leq f(\prod_{j=1}^L p_j^{x_{i,j}})$, 
it follows that each $I_i$ is nonempty.

\subsection{The natural numbers $w_1,w_2,\ldots,w_d$}
Define
\begin{equation}\label{eq:PP}\small
\P'=\{p_1, p_2, \dots, p_L\} \cup \bigg\{p \,:\, p\! \bigm|  \!\!
\bigg( \prod_{i=1}^{d}\alpha_i \bigg) \!\bigg( 
 \prod_{\substack{1\leq i\leq d \\ 1\leq j\leq m}}(\alpha_i-\beta_j) \bigg)  \!
\bigg(\prod_{1\leq i<j \leq d} (\alpha_i-\alpha_j)\bigg)
\bigg \}.
\end{equation}
Lemma \ref{Lemma:W} provides pairwise relatively prime $w_1, w_2, \dots, w_d \in \N$ 
such that $f(w_i) \in I_i$ and $p \nmid w_i$ for all $p \in \P'$ and $1 \leq i \leq d$.

\subsection{The natural number $c$}
Since $p_1,p_2,\ldots, p_L, w_1,\ldots, w_d$ are pairwise relatively prime,
the Chinese remainder theorem yields $c \in \N$ such that
\begin{align}
	c & \equiv  e_jp_j+b_j \pmod{p_j^{s+1}}   && \text{for $1\leq j \leq L$}, \label{eq:cclever}\\
	c & \equiv  w_i-\alpha_i \pmod{w_i^2} && \text{for $1 \leq i \leq d$}. \label{eq:ca}
\end{align}

\subsection{The polynomials}
Define
\begin{equation}\label{eq:h0def}
h_0(t)=\bigg[\bigg(\prod_{j=1}^dw_j^2\bigg)\bigg(\prod_{j=1}^{L}p_j^{s+1}\bigg)\bigg] t+c
\end{equation}
and 
\begin{align}
h_i(t)&=h_0(t)+\beta_i && \text{for $1\leq i \leq m$}, \label{eq:hdef} \\
g_i(t)&=\frac{h_0(t)+\alpha_i}{w_i\prod_{j=1}^L p_j^{x_{i,j}}} && \text{for $1\leq i \leq d$}. \label{eq:gdef}
\end{align}

\subsection{Integer coefficients}
By construction, $h_1,h_2,\ldots,h_m \in \Z[t]$.  Let us verify that $g_1,g_2,\ldots,g_d \in \Z[t]$.
From \eqref{eq:h0def} and \eqref{eq:gdef}, we have
\begin{equation}\label{eq:checkg}
g_i(t) = \frac{(\prod_{j=1}^d w_j^2)(\prod_{j=1}^L p_j^{s+1})}{w_i\prod_{j=1}^L p_j^{x_{i,j}}}t + \frac{c+\alpha_i}{w_i\prod_{j=1}^L p_j^{x_{i,j}}}.
\end{equation}
The coefficient of $t$ is an integer since $x_{i,j} \leq s$ for $1 \leq i \leq d$ and $ 1\leq j \leq L$.  
For the constant term, first observe that each $w_i \mid (c+\alpha_i)$ by \eqref{eq:ca}.  
The definition \eqref{eq:xdef} of $x_{i,j}$ ensures that
$p_j^{x_{i,j}} \mid (\alpha_i+e_jp_j+b_j)$ and 
\eqref{eq:cclever} implies
\begin{equation}\label{cWithAlpha}
    \alpha_i+e_jp_j+b_j  \equiv \alpha_i + c \pmod{p_j^{s+1}}.
\end{equation}
Consequently, $\prod_{j=1}^L p_j^{x_{i,j}} \mid (c+\alpha_i)$. 
Since $p_1,p_2,\ldots, p_L, w_1,\ldots, w_d$ are pairwise relatively prime,
the constant term in \eqref{eq:checkg} is an integer.
Thus, $g_1,g_2,\ldots,g_d\in \Z[t]$.

\subsection{Nonvanishing modulo small primes}
Consider 
\begin{equation}\label{eq:F}
F(t)=\bigg(\prod_{i=1}^{m} h_i(t)\bigg)\bigg(\prod_{i=1}^d g_i(t)\bigg) \in \Z[t]
\end{equation}
(the first product excludes $h_0$)
and observe that $\deg F = m+d$.  
We claim that $F$ does not vanish modulo any of $p_1,p_2,\ldots,p_L$.
Since $x_{i,\ell}\leq s$,
\begin{equation*}
p_{\ell} \Bigm| \frac{\left(\prod_{j=1}^{d}w_j^2\right)\left(\prod_{j=1}^L p_j^{s+1}\right) }{w_i\prod_{j=1}^L p_j^{x_{i,j}}}\end{equation*}
and hence $h_0(t)\equiv c \pmod{p_{\ell}}$ by \eqref{eq:h0def}.
The definition \eqref{eq:xdef} of $x_{i,\ell}$ and \eqref{cWithAlpha} imply
\begin{equation*}
x_{i,\ell} = \max\{y\,:\, p_{\ell}^y\!\mid\! (c+\alpha_i)\}
\end{equation*}
and therefore the constant term in \eqref{eq:checkg} is not divisible by ${p_{\ell}}$.
Thus,
\begin{equation*}
    g_i(t)=\frac{h_0(t)+\alpha_i}{w_i\prod_{j=1}^L p_j^{x_{i,j}}} \equiv \frac{c+\alpha_i}{w_i\prod_{j=1}^L p_j^{x_{i,j}}}
\not\equiv 0 \pmod{p_{\ell}}.
\end{equation*}
Since \eqref{eq:cclever} implies that $c\equiv b_{\ell} \pmod{p_{\ell}}$,
it follows from \eqref{defb} that
\begin{equation*}
    h_i(t) =h_0(t)+\beta_i \equiv  c+\beta_i \equiv b_{\ell}+\beta_i \not\equiv 0 \pmod{p_{\ell}}.
\end{equation*}
Consequently, $F$ does not vanish modulo any of $p_1,p_2,\ldots,p_L$.

\subsection{Nonvanishing modulo large primes}
Suppose toward a contradiction that $F$ vanishes identically modulo some prime $p \notin \{p_1,p_2,\ldots,p_L\}$.
Observe that $p > m+d = \deg F$ since $L = \pi(m+d)$.   
The fully-factored presentation \eqref{eq:F} ensures that some linear factor of $F$ vanishes identically modulo $p$. 

The definitions \eqref{eq:h0def}, \eqref{eq:hdef}, and \eqref{eq:gdef} ensure that the leading coefficient
of each linear factor of $F$ divides $(\prod_{j=1}^d w_j^2)(\prod_{j=1}^{L} p_j^{s+1})$.
Thus, $p \mid w_k$ for some $1 \leq k \leq d$.
Our construction \eqref{eq:ca} of $c$ ensures that
$c\equiv w_k-\alpha_k \equiv -\alpha_k \pmod{w_k}$ and hence
\begin{equation}\label{eq:h0t}
h_0(t)\equiv c  \equiv -\alpha_k \pmod{p}.
\end{equation}
The construction of $w_k$ implies 
$\gcd(w_{k}, \beta_i-\alpha_{k})=1$ since no prime in
the set $\P'$ defined by \eqref{eq:PP} divides $w_k$.
Thus, for $i=1,2,\ldots,m$ and all $t \in \Z$,
\begin{equation*}\qquad
h_i(t) \equiv \beta_i+c \equiv \beta_i-\alpha_k \not\equiv 0 \pmod{p} .
\end{equation*}

Since $p \nmid w_i$ for $i \neq k$, \eqref{eq:h0t} implies that for all $t \in \Z$,
\begin{equation*}\qquad
g_i(t) = \frac{h_0(t)+\alpha_i}{w_i\prod_{j=1}^L p_j^{x_{i,j}}} \equiv \frac{-\alpha_k+\alpha_i}{w_i\prod_{j=1}^L p_j^{x_{i,j}}}  \not \equiv 0\pmod{p}
\end{equation*}
because $\gcd(w_i,\alpha_i-\alpha_{k})=1$ since no prime in
$\P'$ divides $w_i$.
Now consider the case $i=k$, for which $p \mid w_k$.  Then \eqref{eq:ca} ensures that
\begin{equation*}
\frac{c+\alpha_{k}}{w_{k}}\equiv 1\pmod{w_{k}} 
\qquad\text{and hence}\qquad \frac{c+\alpha_{k}}{w_{k}}\equiv 1 \pmod{p}.
\end{equation*}
For all $t \in \Z$, \eqref{eq:h0def} and \eqref{eq:gdef} imply
\begin{equation*}\qquad
g_k(t) \equiv \prod_{j=1}^L p_j^{-x_{i,j}}  \not\equiv 0 \pmod{p}.
\end{equation*}

Since no linear factor of $F$ vanishes identically, we have reached a contradiction.
Consequently,  $F$ does not vanish identically modulo any $p \notin \{p_1,p_2,\ldots,p_L\}$.

\subsection{Conclusion}\label{Subsection:Lem2}
Dickson's conjecture provides infinitely many $t$ such that 
\begin{align*}
&\text{$h_i(t)$ is prime for $1 \leq i \leq m$},\\
&\text{$g_j(t)$ is prime for $1 \leq j \leq d$},\\
&\text{$g_j(t) > \max\{w_j,p_L\}$ for $1 \leq j \leq d$}.
\end{align*}
Let $n = h_0(t)$ for any such $t$.  Then \eqref{eq:hdef} and \eqref{eq:gdef} imply
\begin{align*}
n+ \beta_i &= h_i(t)  && \text{for $1 \leq i \leq m$},\\
n+\alpha_i &= g_i(t)w_i\prod_{j=1}^L p_j^{x_{i,j}} && \text{for $1 \leq j \leq d$}.
\end{align*}
Since $g_i(t)$, $w_i$, and $\prod_{j=1}^L p_j^{x_{i,j}}$ are pairwise relatively prime
for each $1 \leq i \leq d$,
\begin{equation*}
f(n+\alpha_i) 
= f\bigg(g_i(t)w_i\prod_{j=1}^L p_j^{x_{i,j}}\bigg)
=f\big(g_i(t)\big)f(w_i)f\bigg(\prod_{j=1}^L p_j^{x_{i,j}}\bigg)
\end{equation*}
because $f$ is multiplicative.  
Condition (a) asserts that $\lim_{p\to \infty} f(p)=1$, so
$f(g_i(t)) = 1 + o(1)$ as $t$ increases.
By definition, each $f(w_i) \in I_i$, the open interval defined by \eqref{eq:I}
Consequently, if $t$ is sufficiently large
\begin{equation*}
f(n+\alpha_i) \in 
f\bigg(\prod_{j=1}^L p_j^{x_{i,j}} \bigg)I_i = \big(\xi_i(1-\epsilon),\xi_i(1+\epsilon)\big)
\end{equation*}
for $1 \leq i \leq d$.  Since $\epsilon > 0$ was arbitrary, we conclude that
\begin{equation*}\small
\Big\{ \big(f(n+\alpha_1), f(n+\alpha_2), \dots, f(n+\alpha_d) \big)
: n+\beta_1,n+\beta_2,\ldots, n+\beta_m \in \P \Big\} 
\end{equation*}
is dense in $[0,r]^d$.\qed

\section{Proof of Theorem \ref{Theorem:Main}}\label{Section:Proof}

Suppose that $f$ is a positive multiplicative function such that 
\begin{enumerate}
    \item $\lim_{p\to \infty} f(p)=1$, and
    \item $\prod_{p}f(p)$ is not absolutely convergent,
\end{enumerate}
$\alpha_1,\alpha_2,\ldots, \alpha_{d} \in \Z$ are distinct,
and $(\beta_1, \beta_2, \ldots, \beta_m)$ is an admissible $m$-tuple with $\alpha_i\neq \beta_j$
for all $i,j$.  Suppose that 
\begin{equation*}
\lim_{n\to\infty} \frac{h(n+1)}{h(n)} = \kappa \in (0,\infty)
\end{equation*}
and define $g = fh$.
Since
\begin{equation*}
\dfrac{g(n+\alpha_i)}{g(n+\alpha_j)}=\dfrac{f(n+\alpha_i)}{f(n+\alpha_j)}\cdot \dfrac{h(n+\alpha_i)}{h(n+\alpha_j)},
\end{equation*}
and
\begin{equation*}
\lim_{n\to \infty} \frac{h(n+\alpha_i)}{h(n+\alpha_j)} = \kappa^{\alpha_i - \alpha_j},
\end{equation*}
to prove the density of either \eqref{eq:S1} or \eqref{eq:S2} in $[0,\infty)^{d-1}$,
it suffices to consider the case in which $h$ is identically $1$.

Condition (b) ensures that there is an $\S \subseteq \P$ such that 
$\prod_{p \in \S}f(p)$ diverges to $0$ or $\infty$.  Assume Dickson's conjecture and
apply Theorem \ref{Theorem:Primary}
to $f$ or $1/f$, respectively, and conclude that 
\begin{equation}\label{eq:S}
S=
\big\{\big(f(n+\alpha_1), f(n+\alpha_2), \dots, f(n+\alpha_d)\big): 
n+\beta_1,n+\beta_2,\ldots, n+\beta_m \in \P \big\} 
\end{equation}
is dense in $[0,r]^d$ or $[r,\infty)^d$ for some $r > 0$.
By possibly considering $1/f$ in place of $f$, we may assume that
$S$ is dense in $[0,r]^d$.  Let $(\xi_1,\xi_2,\ldots,\xi_d) \in [0,r]^d$ and set
\begin{equation*}
\rho = \max \{\xi_1, \xi_2, \dots,\xi_d\}.
\end{equation*}
Given $\epsilon >0$, let $\delta>0$ be such that 
\begin{equation*}
1-\epsilon < \frac{1-\delta}{1+\delta}<\frac{1+\delta}{1-\delta}<1+\epsilon.
\end{equation*}
Select $x_1 \in (0,r/\rho)\cap (0,r)$ and define $x_i=x_1\xi_i$ for $2\leq i \leq d$. 
Thus,
\begin{equation*}
0< x_i =  x_1\xi_i <  \bigg(\frac{r}{\rho} \bigg)\rho = r
\quad \text{for $1 \leq i \leq d$}
\end{equation*}
and hence $( x_1,x_2, \ldots, x_{d}) \in (0,r)^{d}$.
Since $S$ is dense in  $[0,r]^{d}$, there is an $n\in \N$ such that 
$n+\beta_1,n+\beta_2,\ldots, n+\beta_m$ are prime and
\begin{equation*}
| f(n+\alpha_i) - x_i | < \delta x_i \quad\text{for $1 \leq i\leq d$}. 
\end{equation*}
Consequently, 
\begin{equation*}
\frac{f(n+\alpha_{i-1})}{f(n+\alpha_1)}<\dfrac{x_{i-1}(1+\delta)}{x_1(1-\delta)}=\xi_{i-1}\frac{1+\delta}{1-\delta}<\xi_{i-1}(1+\epsilon),
\end{equation*}
and
\begin{equation*}
\frac{f(n+\alpha_{i-1})}{f(n+\alpha_1)}>\dfrac{x_{i-1}(1-\delta)}{x_1(1+\delta)}=\xi_{i-1}\frac{1-\delta}{1+\delta}>\xi_{i-1}(1-\epsilon)
\end{equation*}
for $2 \leq i \leq d$.  In particular,
\begin{equation*}
\frac{f(n+\alpha_{i-1})}{f(n+\alpha_1)}\in \big((1-\epsilon)\xi_{i-1},(1+\epsilon)\xi_{i-1} \big)
\quad \text{for $i=2,3,\ldots, d-1$},
\end{equation*}
and hence the set \eqref{eq:S2} is dense in $[0,\infty)^{d-1}$.
Lemma \ref{Lemma:Homeo} provides the corresponding result for the set \eqref{eq:S1}. 
This completes the proof Theorem \ref{Theorem:Main}. \qed

\section{Further research}\label{Section:Further}

We have focused on primality constraints of the form
$n+ \beta_1, n+\beta_2, \ldots, n+\beta_m \in \P$
and simple shifts $n+\alpha_1, n+\alpha_2,\ldots, n +\alpha_d$ in the arguments
of the multiplicative function.
One can consider more general conditions.
For this, Dickson's conjecture (which concerns only linear polynomials) no longer suffices.  However,
the Bateman--Horn conjecture may permit such a generalization \cite{Bateman,Bateman2,1CRTA}.

\begin{problem}
Generalize Theorems \ref{Theorem:Main} and \ref{Theorem:Primary} to include polynomial primality constraints
$P_1(n), P_2(n),\ldots, P_m(n) \in \P$.
\end{problem}

\begin{problem}
Generalize Theorems \ref{Theorem:Main} and \ref{Theorem:Primary} 
so that the arguments $n+\alpha_1, n+\alpha_2,\ldots, n+\alpha_d$ are replaced by polynomial
functions of $n$.
\end{problem}

Obviously, it would be of interest to generalize in both directions simultaneously.
The interplay between the two conditions is likely to be nontrivial since already Theorem \ref{Theorem:Main}
requires that $\alpha_i \neq \beta_j$ for $1 \leq i \leq d$ and $1 \leq j \leq m$.  Example \ref{Example:Bad}
shows that this restriction is, at least in some cases, necessary.

\bibliographystyle{plain}
\bibliography{MSTT4MF}

\end{document}